\def\ssm{\smallsetminus}
\def\restrictedto%
\def\st{\mathchoice{:}{:}{\,:\,}{\,:\,}}
\newcommand{\splitnodes}[2][{}]%
	{\operatorname{\textup{\textsf{split}}}_{#1}({#2})}
\def\disju{\mathbin{\dot{\cup}}}
\let\emptyset\varnothing
\theoremstyle{plain}
\newtheorem{thm}{Theorem}[section]
\newtheorem{cor}[thm]{Corollary}
\newtheorem{prop}[thm]{Proposition}
\theoremstyle{definition}
\newtheorem{defn}[thm]{Definition}
\theoremstyle{remark}
\begin{document}

\title{Strategic equivalence among 
	hat puzzles of various protocols 
	with many colors}
\author{Masaru Kada \and Souji Shizuma}
\date{September 17, 2019}

\maketitle

\renewcommand{\thefootnote}{\fnsymbol{footnote}}
\footnote[0]{
	2010 Mathematics Subject Classification: 
	Primary 03E05; 
	Secondary 03E20, 03E25. }
\footnote[0]{
	Keywords: 
	hat guessing game, 
	hat puzzle, 
	additive group, 
	axiom of choice. 	
}
\renewcommand{\thefootnote}{\arabic{footnote}}

\begin{abstract}
We discuss 
``puzzles of prisoners and hats'' 
with infinitely many prisoners 
and more than two hat colors. 
Assuming that the set of hat colors 
is equipped with a commutative group structure, 
we 
prove strategic equivalence 
among puzzles of several protocols 
with countably many prisoners. 
\end{abstract}

\section{Introduction}\label{sec:intro}

The famous mathematical puzzles 
known as ``puzzles of prisoners and hats'' 
or ``hat guessing games''
are originally suggested 
by Gardner 
\cite{Gar:mathpuzzle}, 
and have been studied 
in connection with  
combinatorics and set theory. 
Set-theoretic studies before 2013 
of hat puzzles 
are summarized by Hardin and Taylor \cite{HT:coord}. 

In the present paper 
we adopt 
the formalization of hat puzzles 
in the Hardin--Tayler book \cite{HT:coord}. 
We call the players \emph{agents} instead of ``prisoners'' 
and the set of agents is often denoted by $A$. 
The set of hat colors is often denoted by $K$.  
We call  
a function $f$ from $A$ to $K$ 
a \emph{coloring}. 
%
A \emph{visibility graph} is 
a directed graph $V$ on 
the set $A$ of vertices without loops, 
that is, 
$V\subseteq A\times A\ssm\{\langle x,x\rangle\st x\in A\}$. 
By a directed edge $\langle a,b\rangle\in V$ 
we intend to mean 
that the agent $a$ can see (the hat worn by) the agent $b$.  
For 
an agent 
$a\in A$, 
we 
write 
$V(a)=\{x\in A\st 
	\langle a,x\rangle\in V
	\}$.  
A 
visibility graph $V$ is \emph{complete} 
if 
$V=A\times A\ssm\{\langle x,x\rangle\st x\in A\}$
(and hence $V(a)=A\ssm\{a\}$ for all $a\in A$). 
In some cases, 
declarations 
by 
the 
agents are not simultaneous, 
and some agent $a$ can hear 
some of other agents' 
declarations 
before 
declaring his own guess. 
We write $H(a)$ to denote the set of agents 
who declare their guesses before 
the agent $a$ does. 

Basically, 
a \emph{strategy} 
for 
an agent $a\in A$ 
is a function $G_a$ from ${^{V(a)\disju H(a)}K}$ to $K$, 
where $X\disju Y$ means the union when we regard $X$ and $Y$ 
as their disjoint copies. 
This reflects the manner of each agent $a$, 
who 
guesses the color of his own hat 
from the colors of hats 
worn by agents 
in 
his vision 
$V(a)$ 
and the declarations of 
agents 
in 
his audition 
$H(a)$. 
%
We call 
a 
collection 
$P=\{\langle a,G_a\rangle\st a\in A\}$ 
of all pairs of an agent and his strategy 
a \emph{predictor}. 

During the studies of hat puzzles, 
various protocols of 
guessing 
are suggested. 
Here 
we introduce several protocols  
and related results.





First, 
we consider 
a hat puzzle with 
complete visibility, 
where 
all agents declare their guesses 
at once. 
In other words, 
$V(a)=A\ssm\{a\}$ 
and $H(a)=\emptyset$ for all $a\in A$.  
Using the axiom of choice, 
%
%
in the case of $|K|=2$, 
Lenstra proved 
a 
significant result 
%
(see \cite[Theorem~3.3.1]{HT:coord} 
for a proof). 
\begin{thm}[Lenstra]\label{thm:Le}
Consider a hat puzzle 
with an arbitrary set $A$ of agents, 
the set $K=\{0,1\}$ of colors, 
complete visibility 
and simultaneous declaration. 
Then there is a predictor 
under which 
all agents guess correctly 
or 
all agents guess incorrectly. 
\end{thm}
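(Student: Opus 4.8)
The plan is to invoke the axiom of choice once, to fix a system of representatives for the finite-difference equivalence relation on the space of colorings, and then to let every agent guess in such a way that the total number of disagreements with the chosen representative comes out even.

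First I would introduce, on the set ${}^{A}K$ of all colorings, the relation $f\sim g$ iff $\{a\in A\st f(a)\neq g(a)\}$ is finite; this is an equivalence relation, and by the axiom of choice we may pick a representative $r_{C}\in C$ from each $\sim$-class $C$. The key observation is that under complete visibility an agent $a$, who sees $f\restrictedto(A\ssm\{a\})$, can reconstruct the $\sim$-class of $f$ — altering a single coordinate never changes the class — so $a$ knows the representative $r$ of that class, and in particular knows the value $r(a)$.

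Next I would describe the strategy $G_{a}$. Put $D=\{b\in A\st f(b)\neq r(b)\}$; this is a finite subset of $A$, and $f(a)=r(a)$ holds exactly when $a\notin D$. Agent $a$ can compute $D\ssm\{a\}$, since for every $b\neq a$ he sees $f(b)$ and knows $r(b)$. Let $G_{a}$ instruct $a$ to guess $r(a)$ when $\card{D\ssm\{a\}}$ is even and to guess $1-r(a)$ when $\card{D\ssm\{a\}}$ is odd; equivalently, $a$ bets that $a\in D$ precisely when doing so would make $\card{D}$ even.

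Finally I would verify correctness by splitting on the parity of $\card{D}$. If $\card{D}$ is even, then $\card{D\ssm\{a\}}$ is even iff $a\notin D$, and in both cases one reads off that $a$'s guess equals $f(a)$, so every agent is correct; if $\card{D}$ is odd, the same computation shows every agent's guess differs from $f(a)$, so every agent is wrong. Since $\card{D}$ is either even or odd, the predictor $P=\{\op{a}{G_{a}}\st a\in A\}$ has the required property. I do not expect a genuine obstacle here: the heart of the matter is just the parity bookkeeping sketched above, and the only point demanding care is checking that each agent really does possess the information his strategy consults — that the single coordinate $f(a)$ he cannot see affects neither the $\sim$-class of $f$ nor the finite set $D\ssm\{a\}$.
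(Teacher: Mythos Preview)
Your argument is correct and is the standard proof of Lenstra's theorem. Note, however, that the paper does not supply its own proof of this statement: it merely cites \cite[Theorem~3.3.1]{HT:coord}, and the argument given there is essentially the one you wrote. The same representative-system idea reappears later in the paper (Proposition~3.1), where it is used to build a $K$-parity function under ZFC.
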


Second, 
we consider a hat puzzle with 
the set $A$ of agents 
and 
complete visibility, where 
a designated agent $s$
declares first 
and then others do at once. 
We will call the agent $s$ the \emph{signaler}, 
and his declaration a \emph{signal}. 
Note that $V(s)=A\ssm\{s\}$ and $H(s)=\emptyset$, 
and 
for all $a\in A\ssm\{s\}$, 
$V(a)=A\ssm\{a\}$ and $H(a)=\{s\}$. 
%
Hardin and Taylor showed, 
under ZF${}+{}$DC, 
that
there is a 
\emph{signaling predictor} 
if and only if 
there is a 
\emph{Lenstra predictor} 
(See \cite[Theorem~3.3.2]{HT:coord}). 

\begin{thm}[ZF${}+{}$DC]\label{thm:siglen}
With an arbitrary set $A$ of agents 
and 
the set $K=\{0,1\}$ of hat colors, 
the following conditions are equivalent: 
\begin{enumerate}
\item The visibility graph $V$ is complete, 
	and in the puzzle of 
	one-in-advance 
	protocol, 
	there is a predictor 
	under which 
	all agents but the signaler 
	guess correctly. 
\item In the puzzle of simultaneous declaration, 
	there is a predictor 
	under which 
	all agents guess correctly 
	or 
	all agents guess incorrectly. 
\end{enumerate}
\end{thm}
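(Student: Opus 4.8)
The plan is to prove the two implications directly and essentially by hand; no use of DC is actually needed, so the argument goes through already in ZF. Throughout, $+$ denotes addition modulo $2$ on $K=\{0,1\}$, and for a coloring $f$ and an agent $b$ we write $f\symdif\{b\}$ for the coloring that agrees with $f$ everywhere except at $b$. The observation underlying both directions is that the existence of a Lenstra predictor on a set $B$ of agents is equivalent to the existence of a function $\pi\colon{}^{B}K\to K$ with $\pi(f)\ne\pi(f\symdif\{b\})$ for every coloring $f$ and every $b\in B$ (call such a $\pi$ \emph{separating}). Indeed, from a separating $\pi$ one obtains a Lenstra predictor by letting each agent $b$ — who sees $f\restrictedto(B\ssm\{b\})$ and hence knows the two colorings $f^0,f^1$ extending it, with $f^i(b)=i$ — guess the unique $i$ with $\pi(f^i)=0$; then $b$ is correct exactly when $\pi(f)=0$, uniformly in $b$. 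Conversely, the ``parity bit'' $\pi(f):=f(b)+G_b(f\restrictedto(B\ssm\{b\}))$ of a Lenstra predictor $(G_b)_{b\in B}$ is independent of $b$ (the set of correct agents being $\emptyset$ or $B$) and is separating.

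For $(2)\Rightarrow(1)$: assume a Lenstra predictor on $A$, i.e.\ a separating $\pi\colon{}^{A}K\to K$, and take the visibility graph complete (as the one-in-advance protocol prescribes). Let the signaler $s$ announce $\sigma:=\pi(g)$, where $g$ is the coloring agreeing with the true coloring $f$ on $A\ssm\{s\}$ and having $g(s)=0$; this depends only on what $s$ sees. Since $f$ and $g$ agree off $s$, we have $\pi(f)=\sigma+f(s)$, a value every agent $a\ne s$ can compute (she sees $f(s)$ and hears $\sigma$). Such an $a$ then guesses the unique $i\in K$ with $\pi(f^i)=\sigma+f(s)$, where $f^0,f^1$ are the two completions of $f\restrictedto(A\ssm\{a\})$; this is correct, since $f$ is the completion placing $f(a)$ at $a$ and $\pi$ separates $f^0$ from $f^1$.

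The content is in $(1)\Rightarrow(2)$. Fix a signaling predictor with signaler $s$ and strategies $G_s$ and $(G_a)_{a\ne s}$; put $A'=A\ssm\{s\}$ and $\sigma(h):=G_s(h)$ for $h\in{}^{A'}K$. The key claim is that $\sigma$ is separating on $A'$: $\sigma(h)\ne\sigma(h\symdif\{b\})$ for all $h\in{}^{A'}K$ and $b\in A'$. For if $\sigma(h)=\sigma(h\symdif\{b\})$, extend $h$ and $h\symdif\{b\}$ to colorings $f$ and $f'$ of $A$ with the same value at $s$, so that $f$ and $f'$ agree off $b$ and disagree at $b$. On both colorings the signaler announces the same bit $\sigma(h)$, and agent $b$ sees the same configuration $f\restrictedto(A\ssm\{b\})=f'\restrictedto(A\ssm\{b\})$; hence her fixed strategy yields the same guess, which by correctness must equal both $f(b)$ and $f'(b)$, contradicting $f(b)\ne f'(b)$. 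Having established the claim, extend $\sigma$ to $A$ by $\hat\sigma(f):=\sigma(f\restrictedto A')+f(s)$; flipping any single coordinate of $f$ — the coordinate $s$, or a coordinate in $A'$ — flips $\hat\sigma(f)$, so $\hat\sigma$ is separating on $A$ and therefore gives a Lenstra predictor on $A$, which is $(2)$.

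The only genuine difficulty is spotting this last claim: that over $K=\{0,1\}$ a one-in-advance predictor is so rigid that the bit it broadcasts must already be a separating function on $A\ssm\{s\}$, i.e.\ already essentially a Lenstra predictor on $A\ssm\{s\}$. Everything else is routine bookkeeping, and in fact the whole equivalence holds in ZF, a fortiori under ZF${}+{}$DC.
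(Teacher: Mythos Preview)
Your proof is correct. The paper does not prove this theorem directly (it is cited from Hardin--Taylor), but your argument matches in spirit the route the paper takes in its own generalization (Theorem~\ref{thm:equiv} combined with Proposition~2.3): your ``separating function'' is precisely what the paper calls a ($\mathbb{Z}_2$-)parity function, and your equivalence between Lenstra predictors and separating functions is the $(1)\leftrightarrow(4)$ part of Theorem~\ref{thm:equiv}. Your key claim in $(1)\Rightarrow(2)$---that the signaler's bit must flip under any single hat-flip in $A\ssm\{s\}$, since otherwise the affected agent sees the same configuration and hears the same signal on two colorings where she wears different hats---is the same information-theoretic contradiction that drives Proposition~2.3, though you apply it directly to $G_s$ rather than detouring through the ``signal-biased'' condition on $G_n$.

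The differences are minor but worth noting: your argument works for an arbitrary set $A$ of agents and in plain ZF, whereas the paper's Theorem~\ref{thm:equiv} is formulated for $A=\omega$ (its emphasis being the generalization to arbitrary abelian $K$) and passes through the auxiliary notion of a signal-biased predictor. Your direct route is slightly cleaner for the two-color case; the paper's detour is what allows the extension to general $K$.
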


Third, 
we consider the following setting: 
The set $A$ of agents is well-oredered by the relation $<_{A}$, 
and the visibility graph $V$ is \emph{one-way forward-complete}, 
that is, 
$V=\{\langle x,y\rangle\in A\times A\st x\mathbin{<_{A}}y\}$ 
(and hence $V(a)=\{x\in A\st a\mathbin{<_{A}}x\}$). 
The agents declare their guesses one by one 
along the order $<_{A}$. 
Note that, for each $a\in A$, 
$H(a)=\{x\in A\st x\mathbin{<_{A}}a\}$. 
Geschke, Lubarsky and Rahn~\cite{GLR:hat} 
studied such a protocol of hat puzzles  
with 
$A=\omega$ 
and 
$K
	=\{0,1\}$, 
and obtained the following result. 

Throughout 
the present 
paper, 
for a function $\varphi$ with domain $D$, $d\in D$ 
and any value $v$, 
$\varphi[d|v]$ 
abbreviates the function 
$\big(\varphi\restrictedto(D\ssm\{d\})\big)
	\cup
	\{\langle d,v\rangle\}$. 

\begin{defn}\label{defn:parity}
A function 
$p$ 
from ${{}^\omega}2$ to $2$ 
is called 
a \emph{parity function} 
if 
for any $f\in {{}^\omega}2$  
and $n\in\omega$, 
$p(f[n|0]) = 1 - p(f[n|1])$ 
holds. 
\end{defn}

\begin{thm}[ZF%
]\label{thm:GLR}
The following conditions are equivalent: 
\begin{enumerate}
\item
	In the puzzle with the set $A=\omega$ of agents, 
	the set $K=\{0,1\}$ of colors, 
	one-way forward-complete visibility 
	and one-by-one declaration,  
	there is a predictor 
	ensuring 
	all agents 
	but the agent 0 guess correctly. 
\item 
	There is a parity function. 
\end{enumerate}
\end{thm}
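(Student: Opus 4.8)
The plan is to prove the two implications separately; in both, the crucial object is the single bit broadcast by agent $0$, which will turn out to be essentially a parity value of the coloring.

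\emph{From \textup{(2)} to \textup{(1)}.} Given a parity function $p$, I define a predictor as follows. Agent $0$ sees exactly $f\restrictedto(\omega\ssm\{0\})$, so he can compute and announce $g(0)=p(f[0|0])$. For $a\ge 1$, agent $a$ hears $g(0),\dots,g(a-1)$ and sees $f\restrictedto\{a+1,a+2,\dots\}$; from this data he forms the function $h$ on $\omega\ssm\{a\}$ given by $h(0)=0$, $h(i)=g(i)$ for $1\le i<a$, and $h(i)=f(i)$ for $i>a$, and then — using the defining property of $p$ at coordinate $a$, which guarantees that exactly one $v\in 2$ satisfies $p(h[a|v])=g(0)$ — he guesses that $v$. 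Each $G_a$ so described is a total function on its domain, since the recipe makes sense for any heard values. It remains to check correctness along actual play: by induction on $a\ge 1$ one shows $g(a)=f(a)$, because, granting $g(i)=f(i)$ for $1\le i<a$, agent $a$'s reconstructed $h$ equals $(f[0|0])\restrictedto(\omega\ssm\{a\})$, and since $f[0|0]=(f[0|0])[a|f(a)]$ for $a\ge 1$ while $g(0)=p(f[0|0])$, the unique good $v$ is $f(a)$.

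\emph{From \textup{(1)} to \textup{(2)}.} Let $P=\{\op a{G_a}\st a\in\omega\}$ be a predictor under which every agent $a\ge 1$ always guesses correctly, and put $s(f)=G_0\big(f\restrictedto(\omega\ssm\{0\})\big)$; thus $s\colon{}^{\omega}2\to 2$ does not depend on coordinate $0$. The claim is that $s(f[n|0])\ne s(f[n|1])$ for every $n\ge 1$ and every $f$. Fix $f$, let $f'=f[n|1-f(n)]$, and compare the plays on $f$ and on $f'$: agents $1,\dots,n-1$ guess correctly in both (the predictor works for every coloring), and $f'$ agrees with $f$ off coordinate $n$, so agent $n$ sees the same colors and hears $g(i)=f(i)=f'(i)$ for $1\le i<n$ in both plays, while the first heard bit is $s(f)$ in one play and $s(f')$ in the other; but the required outputs, $f(n)$ and $f'(n)=1-f(n)$, differ, so $G_n$ cannot return two values on a single input and hence $s(f)\ne s(f')$, and since $\{f[n|0],f[n|1]\}=\{f,f'\}$ the claim follows. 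Finally set $p(f)=s(f)+f(0)\pmod 2$. For $n\ge 1$ the claim yields $p(f[n|0])\ne p(f[n|1])$; for $n=0$, $s(f[0|0])=s(f[0|1])$ while the summand flips, so again $p(f[0|0])\ne p(f[0|1])$. Hence $p$ is a parity function.

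I expect the real content to lie in the flip argument of the second part: reading off the ``parity at coordinate $n$'' property of the broadcast bit from the mere fact that agent $n$ is correct both on $f$ and on its $n$-th flip, and then noticing that the broadcast bit is by itself a parity function only away from coordinate $0$ — a defect repaired by the correction term $f(0)$. The first part is routine once one is careful to make every $G_a$ total and to run the induction in the right order. No choice principle is invoked anywhere, in keeping with the ZF hypothesis of the theorem.
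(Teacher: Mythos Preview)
Your proof is correct. The paper does not prove this theorem directly (it is cited from Geschke--Lubarsky--Rahn), but the paper's proof of the generalized equivalence between a starter-biased predictor and a $K$-parity function in its main theorem follows the same plan: the starter broadcasts the parity value, and each later agent inverts the parity function at his own coordinate to recover his color. Two small differences are worth noting. First, the paper reindexes the agent set as $\{t\}\cup\omega$ with a separate starter $t$, so that $p(f)=G_t(f)$ works directly without your correction term $+f(0)$; your version instead stays faithful to the stated agent set $A=\omega$. Second, for the direction (1)$\to$(2) the paper's general argument assumes the stronger \emph{starter-biased} hypothesis, whose second clause hands you the flip property for free, whereas your flip argument derives it from correctness alone --- this is precisely the one-by-one analogue of the paper's proposition that, for $K=\mathbb{Z}_2$, a signaling predictor is automatically signal-biased.
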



It is not so hard to 
obtain 
a parity function 
under ZFC\@. 
Geschke, Lubarsky and Rahn 
pointed out 
that 
a parity function 
may not exist 
under ZF${}+{}$DC \cite[Theorem~10]{GLR:hat}. 
Theorem~\ref{thm:GLR}
tells us 
that the existence of 
such a predictor 
can be 
regarded as 
a weak choice principle 
(see \cite[Section~4]{GLR:hat} for details).

Theorems~\ref{thm:Le}, \ref{thm:siglen} and \ref{thm:GLR} 
are all stated in the context of 
hat puzzles with two hat colors. 
In the 
following sections, 
we will integrate these theorems 
and generalize them to puzzles with 
more than two 
colors, 
assuming that the set $K$ of colors 
is equipped with 
a structure of 
a 
commutative group. 
In particular, 
by letting $K$ be the cyclic group of order 2  
(that is, $K=\mathbb{Z}_2=(\{0,1\},+)$), 
we 
obtain 
the original theorems 
as corollaries of our theorem.

\section{Main result}

Throughout this section, 
we consider hat puzzles 
with 
the set $K$ of hat colors equipped with 
additive group operation $+$, 
and 
the set $\omega$ of agents. 
We vary visibility graphs 
and guessing protocols. 

All the argument in this section 
can be done under ZF${}+{}$DC. 

\begin{defn}
We consider a hat puzzle with 
complete visibility, where 
all agents declare their guesses at once. 
In this puzzle 
we describe 
a strategy $G_a$ for each agent $a\in \omega$ 
by a function 
from ${^\omega}K$ to $K$ 
such that 
the value $G_a(f)$ does not depend on the value of $f$ at $a$.  
A predictor $P$ in such a puzzle 
is called \emph{biased} 
if, 
for every coloring $f\in {{}^\omega}K$ 
there is $k\in K$ 
such that $P_n(f)=f(n)+k$ holds for all $n\in\omega$. 
\end{defn}

Lenstra's theorem (Theorem~\ref{thm:Le}) 
tells us that,  
in such a hat puzzle with $K=\{0,1\}$, 
there 
is 
a predictor 
under which everyone's guess is correct or 
everyone's guess is incorrect. 
We call such a predictor a \emph{Lenstra predictor}. 
Note that, 
when 
$K=\mathbb{Z}_2$, 
a predictor $P$ is a biased predictor 
if and only if $P$ is a Lenstra predictor. 
So we may regard a biased predictor 
as a generalization of a Lenstra predictor 
to the case when $|K|>2$.

\begin{defn}
We consider a hat puzzle with 
complete visibility, where 
a designated 
signaler
$s\in\omega$ 
declares first 
and then others do at once. 
We describe a 
strategy 
$G_a$ for each agent 
$a\in \omega$ 
in such a puzzle 
in the following way: 
$G_s$ is a function from ${^\omega}K$ to $K$ 
such that 
$G_s(f)$ does not depend on the value of $f$ at $s$, 
and for $a\in \omega\ssm\{s\}$, 
$G_a$ is a function from ${^\omega}K\times K$ to $K$ 
such that 
$G_a(f,k)$ does not depend on the value of $f$ at $a$ 
(the second 
argument $k$ 
is intended to receive 
a 
signal). 
%
A predictor 
$P=\{\langle n,G_n\rangle\st n\in\omega\}$ 
in such a puzzle 
is called \emph{signal-biased} 
if 
the following two conditions hold:
\begin{enumerate} 
\item 
	For 
	every coloring $f\in {{}^\omega}K$, 
	$P_n(f,P_s(f))=f(n)$ holds for every $n\in\omega\ssm\{s\}$ 
	(everyone but the signaler guesses correctly).  
\item For every coloring $f\in {{}^\omega}K$ 
	and any two colors $k,l\in K$, 
\[
	P_n(f,k)
		-
	P_n(f,l)
	=l-k
\]
	holds for  
	every $n\in\omega\ssm\{s\}$.
\end{enumerate}
\end{defn}

In the case of $K=\{0,1\}$, 
a predictor which satisfies the first clause 
in the above definition 
is called a \emph{signaling} predictor~\cite[Theorem~3.3.2]{HT:coord}. 
It is obvious that, when $K=\mathbb{Z}_2$, 
a signal-biased predictor is a signaling predictor. 

\begin{prop}
When $K=\mathbb{Z}_2$, 
a signaling predictor is 
a 
signal-biased predictor. 
\end{prop}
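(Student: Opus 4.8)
The plan is to show that, when $K=\mathbb{Z}_2$, clause~(2) in the definition of a signal-biased predictor is an automatic consequence of clause~(1), so that a predictor satisfying clause~(1) -- that is, a signaling predictor -- automatically satisfies both clauses. Fix an agent $n\in\omega\ssm\{s\}$ and a coloring $f\in{}^\omega K$, and put $g=f\restrictedto(\omega\ssm\{n\})$. Since $P_n(\cdot,k)$ does not depend on the value of its first argument at $n$, the colorings $h_0=g\cup\{\langle n,0\rangle\}$ and $h_1=g\cup\{\langle n,1\rangle\}$ give the same value $P_n(h_0,k)=P_n(h_1,k)$ for each $k\in K$; call this common value $r(k)$, and note $P_n(f,k)=r(k)$ for all $k$.

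The key step is to apply clause~(1) at \emph{both} colorings $h_0$ and $h_1$. This yields $r(P_s(h_0))=P_n(h_0,P_s(h_0))=h_0(n)=0$ and, likewise, $r(P_s(h_1))=h_1(n)=1$. Hence $P_s(h_0)\ne P_s(h_1)$; as $K=\mathbb{Z}_2$ has exactly two elements, $\{P_s(h_0),P_s(h_1)\}=\{0,1\}$, and therefore $r(0)\ne r(1)$.

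It then remains to verify clause~(2) by a short case split. If $k=l$, then $P_n(f,k)-P_n(f,l)=0=l-k$. If $k\ne l$, then $\{k,l\}=\{0,1\}$; since $r(0)\ne r(1)$ we get $r(k)\ne r(l)$, so $P_n(f,k)-P_n(f,l)=r(k)-r(l)$ equals the unique nonzero element of $\mathbb{Z}_2$, which is also the value of $l-k$. Thus clause~(2) holds for every $n\in\omega\ssm\{s\}$, so $P$ is signal-biased.

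The one point that deserves care -- and really the whole content of the argument -- is the choice to invoke clause~(1) at the two colorings $h_0,h_1$ that agent $n$ cannot tell apart: the signaler, who does see agent $n$'s hat, is free to broadcast different signals at $h_0$ and at $h_1$, and it is exactly this forced disagreement of the two signals, combined with $\card{K}=2$, that pins down how $P_n$ reacts to the ``wrong'' signal. Everything else is a routine calculation in $\mathbb{Z}_2$.
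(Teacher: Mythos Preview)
Your proof is correct and follows essentially the same approach as the paper's: both arguments fix $n\neq s$, look at the two colorings that agree off $n$, use that $G_n$ ignores the $n$-th coordinate, and invoke the signaling property at both colorings to force $G_n(f,0)\neq G_n(f,1)$. The only cosmetic difference is that the paper phrases this step as a proof by contradiction (assume $G_n(f,0)=G_n(f,1)$ and derive that agent~$n$ cannot be correct at both colorings), whereas you argue directly via $r(P_s(h_0))=0\neq 1=r(P_s(h_1))$; the content is the same.
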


\begin{proof}
Let $s\in\omega$ be the designated signaler 
and $P=\{\langle n,G_n\rangle\st n\in\omega\}$ 
be a signaling predictor. 
It suffices to show that, 
for every $f\in{^\omega}2$ and every $n\in\omega\ssm\{s\}$, 
$G_n(f,0)\neq G_n(f,1)$ holds. 

Suppose not, 
and choose $f\in{^\omega}2$ and $n\in\omega\ssm\{s\}$ 
so that $G_n(f,0)=G_n(f,1)=i$. 
Let $f_0=f[n|0]$ and $f_1=f[n|1]$. 
Since the value of $G_n$ does not depend on $f(n)$, 
we have 
\[
	G_n(f_0,0)=G_n(f_1,0)
	\quad\text{ and }\quad
	G_n(f_0,1)=G_n(f_1,1). 	
\]
Since either $f_0$ or $f_1$ is identical to $f$, we have  
\[
	G_n(f_0,0)=G_n(f_1,0)	=	i
	=
	G_n(f_0,1)=G_n(f_1,1)	
\]
This means that, 
whichever the value $f(n)$ is, 
and whichever the signal is, 
the strategy $G_n$ suggests the same guess. 
In other words, 
$G_n$ has no way to switch the guess 
when 
the 
color of the hat worn by the agent $n$ is switched. 
This contradicts the assumption that $P$ is a signaling predictor, 
which says that 
for \emph{any} coloring $f$ 
the predictor $P$ tells the agent $n$ the correct guess.  
\end{proof}

\begin{defn}
For an additive group $(K,{+})$, 
a function 
$p$ 
from ${{}^\omega}K$ to $K$ 
is called 
a \emph{$K$-parity function} 
if 
for any $f\in {{}^\omega}K$, 
$n\in\omega$ and $k,l\in K$, 
$
p(f[n|k])-p(f[n|l])=
l-k
$ 
holds. 
\end{defn}

A parity function, 
which we defined in Definition~\ref{defn:parity}, 
is 
nothing other than a $\mathbb{Z}_2$-parity function.

\begin{defn}
We consider a hat puzzle with 
one-way forward-complete visibility, where 
agents declare their guesses one-by-one 
along the ordering on $\omega$. 
In such a puzzle, 
we describe a strategy $G_a$ of the agent $a$ 
by a function from ${^\omega}K$ to $K$ such that 
$G_a(f)$ does not depend on the value of $f$ at $a$, 
since 
$V(a)\cap H(a)=\emptyset$ and 
$V(a)\cup H(a)=\omega\ssm\{a\}$ for each agent $a\in \omega$. 
%
%
A predictor $P$ in such a puzzle 
is called \emph{starter-biased} 
if 
the following two conditions hold:
\begin{enumerate} 
\item 
	For every coloring $f\in {{}^\omega}K$, 
	$P_n\big(f[0|P_0(f)]\big)=f(n)$ 
	holds for every $n\in\omega\ssm\{0\}$ 
	(everyone but the agent $0$ guesses correctly).  
\item For every coloring $f\in {{}^\omega}K$ 
	and any two colors $k,l\in K$, 
\[
	P_n(f[0|k])
		-
	P_n(f[0|l])
	=l-k
\]
	holds for  
	every $n\in\omega\ssm\{0\}$.
\end{enumerate}
\end{defn}

\begin{thm}\label{thm:equiv}
For hat puzzles with the set $\omega$ of agents 
and a set $K$ of hat colors 
equipped with an additive group operation $+$, 
the following conditions are equivalent:
\begin{enumerate}
\item\label{item:biased}
	There is a biased predictor 
	in the puzzle with complete visibility 
	and at-once declaration. 
\item\label{item:signalbiased} 
	There is a signal-biased predictor 
	in the puzzle with complete visibility 
	and 
	one-in-advance 
	declaration. 
\item\label{item:starterbiased}
	There is a starter-biased predictor 
	in the puzzle with one-way forward-complete visibility 
	and one-by-one declaration. 
\item\label{item:parity} 
	There is a $K$-parity function. 
\end{enumerate}
\end{thm}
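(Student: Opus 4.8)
The plan is to establish the cyclic chain of implications $(\ref{item:biased})\Rightarrow(\ref{item:parity})\Rightarrow(\ref{item:starterbiased})\Rightarrow(\ref{item:signalbiased})\Rightarrow(\ref{item:biased})$. In every step the new predictor (or function) is written down explicitly in terms of the old one and the group operation, so no form of choice enters and the whole argument takes place in ZF${}+{}$DC. For $(\ref{item:biased})\Rightarrow(\ref{item:parity})$, note first that for a biased predictor $P$ and a coloring $f$ the bias $k$ with $P_n(f)=f(n)+k$ for all $n$ is unique (cancel $f(n)$), so it defines a function $p\colon{}^{\omega}K\to K$. Fixing $f$, $n\in\omega$ and $a\in K$, the fact that $P_n$ ignores the $n$-th coordinate gives $P_n(f[n|a])=P_n(f)$, while applying the biased property to $f[n|a]$ gives $P_n(f[n|a])=a+p(f[n|a])$; hence $p(f[n|a])=P_n(f)-a$, which does not depend on $a$, and the parity identity $p(f[n|a])-p(f[n|b])=b-a$ follows.

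For $(\ref{item:parity})\Rightarrow(\ref{item:starterbiased})$, given a $K$-parity function $p$ I let agent $0$ declare $P_0(f):=p(f[0|0])$ (which does not depend on $f(0)$), and for $n\ge 1$ I let $P_n$ act on an input sequence $g$ by returning the unique $x\in K$ with $p\big(g[0|0][n|x]\big)=g(0)$, that is, $x=p\big(g[0|0][n|0]\big)-g(0)$, using that the left-hand side is an affine function of $x$ with ``slope'' $-1$. In a faithful run agent $n$ sees the sequence $f[0|P_0(f)]$, and since $g[0|0]=f[0|0]$ for that $g$, the parity identity $p(h[n|0])-p(h)=h(n)$ applied to $h=f[0|0]$ shows that $x=f(n)$; the affine dependence on the $0$-th coordinate yields clause (2). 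One must be slightly careful here with the (routine) induction along $\omega$ showing that in the faithful run agents $1,\dots,n-1$ have already guessed correctly, so that the sequence agent $n$ actually receives really is $f[0|P_0(f)]$.

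For $(\ref{item:starterbiased})\Rightarrow(\ref{item:signalbiased})$, I take the signaler to be agent $0$ and observe that the one-way forward-complete puzzle and the complete-visibility one-in-advance puzzle carry matching information: agent $0$ in the former sees the same hats as the signaler, and in a faithful run any agent $n\ge 1$ in the former has available exactly agent $0$'s declaration, the hats of agents $>n$, and (through the earlier correct declarations) the hats of agents $1,\dots,n-1$, which is precisely what non-signaler $n$ sees and hears in the latter. Thus a starter-biased $P$ transfers verbatim by setting $P'_0:=P_0$ and $P'_n(f,c):=P_n(f[0|c])$ for $n\ge 1$; the two clauses of ``starter-biased'' turn into the two clauses of ``signal-biased''. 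The conceptual point worth spelling out is that the $0$-th coordinate of an agent's input sequence in the one-way puzzle already encodes \emph{agent $0$'s declaration}, not $f(0)$, which is exactly why it lines up with the signal.

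Finally, for $(\ref{item:signalbiased})\Rightarrow(\ref{item:biased})$, let $P$ be a signal-biased predictor with signaler $s$. In the at-once complete-visibility puzzle every agent $n\ne s$ sees the signaler's hat $f(s)$, so I let him run his strategy on the counterfeit signal $f(s)$, that is $Q_n(f):=P_n(f,f(s))$, and put $Q_s:=P_s$. By clause (2) of ``signal-biased'' the guess $P_n(f,f(s))$ differs from the correct guess $P_n(f,P_s(f))=f(n)$ by $P_s(f)-f(s)$, which is the same for every $n$; hence $Q$ is a biased predictor with bias $P_s(f)-f(s)$, and one checks directly that each $Q_n$ ignores the $n$-th coordinate. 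The heart of the theorem is the pair $(\ref{item:biased})\Leftrightarrow(\ref{item:parity})$, which is pure algebra; the only place demanding care is keeping straight, in $(\ref{item:parity})\Rightarrow(\ref{item:starterbiased})$ and $(\ref{item:starterbiased})\Rightarrow(\ref{item:signalbiased})$, which coordinate of an agent's input encodes a seen hat and which encodes a heard declaration, together with the inductive verification that the faithful run produces correct guesses from agent $1$ onward.
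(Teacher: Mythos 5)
Your proof is correct, but it is organized quite differently from the paper's. The paper proves three separate biconditionals --- (\ref{item:biased})${}\leftrightarrow{}$(\ref{item:signalbiased}), (\ref{item:signalbiased})${}\leftrightarrow{}$(\ref{item:parity}) and (\ref{item:starterbiased})${}\leftrightarrow{}$(\ref{item:parity}) --- so that conditions (\ref{item:signalbiased}) and (\ref{item:parity}) act as hubs, six implications in all; you instead close the single cycle (\ref{item:biased})${}\to{}$(\ref{item:parity})${}\to{}$(\ref{item:starterbiased})${}\to{}$(\ref{item:signalbiased})${}\to{}$(\ref{item:biased}) in four. Two of your arrows essentially coincide with the paper's: your (\ref{item:parity})${}\to{}$(\ref{item:starterbiased}) is the paper's construction (modulo your handling of agent $0$ via $p(f[0|0])$ in place of the paper's auxiliary starter $t$), and your (\ref{item:signalbiased})${}\to{}$(\ref{item:biased}) is exactly the paper's counterfeit-signal argument. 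The other two are genuinely new: you extract the parity function directly from a biased predictor as the coloring-dependent bias $p(f)=P_n(f)-f(n)$, where the paper routes (\ref{item:biased})${}\to{}$(\ref{item:signalbiased})${}\to{}$(\ref{item:parity}); and you pass directly from a starter-biased to a signal-biased predictor by the substitution $P'_n(f,c)=P_n(f[0|c])$, exploiting the fact that a non-signaler in the one-in-advance protocol has at least as much information as agent $n$ in the one-by-one protocol (the paper has no direct arrow between (\ref{item:signalbiased}) and (\ref{item:starterbiased})). Your route is shorter and isolates the algebraic core in (\ref{item:biased})${}\leftrightarrow{}$(\ref{item:parity}); the paper's arrangement buys explicit two-way translations between each adjacent pair of protocols. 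All your constructions are explicit and choice-free, the well-definedness checks (uniqueness of the bias $k$, each strategy ignoring its own coordinate) go through, and the induction you flag in (\ref{item:parity})${}\to{}$(\ref{item:starterbiased}) is exactly the one the paper carries out.
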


\begin{proof}
(\ref{item:biased})${}\to{}$(\ref{item:signalbiased}):
	Suppose that 
	$P^{\mathrm{b}}
		=\{\langle n,G_n^{\mathrm{b}}\rangle
		\st n\in\omega\}$ 
	is a biased predictor. 
	We shall construct a signal-biased predictor
	$P^{\mathrm{sb}}
		=\{\langle n,G_n^{\mathrm{sb}}\rangle
		\st n\in\omega\}$. 
	Let $s\in\omega$ be the designated signaler. 
	For each coloring $f\in{{}^\omega}K$, 
	we set 
		$G_s^{\mathrm{sb}}(f)
		=G_s^{\mathrm{b}}(f)$, 
	and for $n\in\omega\ssm \{s\}$ and $x\in K$,  
		$G_n^{\mathrm{sb}}(f,x)
		=G_n^{\mathrm{b}}(f)
			-(x-f(s))$. 
	Now we verify that this works. 
	It is easy to see that $P^{\mathrm{sb}}$ satisfies 
		the second clause of the definition 
		of a signal-biased predictor. 
	Now 
	fix $n\in\omega\ssm\{s\}$ 
	and we will show that the agent $n$ guesses correctly. 
	Since $P^{\mathrm{b}}$ is a biased predictor, 
	we have 
\[
		G_n^{\mathrm{b}}(f)-f(n)
			=G_s^{\mathrm{b}}(f)-f(s), 
\]
	and by the definition of $G_n^{\mathrm{sb}}$, 
	we have 
	\begin{align*}
		G_n^{\mathrm{sb}}(f,G_s^{\mathrm{sb}}(f))
		&=	G_n^{\mathrm{b}}(f)-(G_s^{\mathrm{b}}(f)-f(s))	\\
		&=	G_n^{\mathrm{b}}(f)-(G_n^{\mathrm{b}}(f)-f(n))	\\
		&=	f(n). 
	\end{align*}		
	
(\ref{item:signalbiased})${}\to{}$(\ref{item:biased}):
	Suppose that 
	$P^{\mathrm{sb}}
		=\{\langle n,G_n^{\mathrm{sb}}\rangle
		\st n\in\omega\}$ 
	is a signal-biased predictor 
	where $s$ is the signaler. 
	We shall construct a biased predictor 
	$P^{\mathrm{b}}
		=\{\langle n,G_n^{\mathrm{b}}\rangle
		\st n\in\omega\}$. 
	For each $f\in {^\omega}K$, let 
	$G_s^{\mathrm{b}}(f)
		=G_s^{\mathrm{sb}}(f)$ 
	and for $n\in\omega\ssm\{s\}$, 
	$G_n^{\mathrm{b}}(f)
		=G_n^{\mathrm{sb}}(f,f(s))$.  
	It suffices to show that 
	$G_n^{\mathrm{b}}(f)-f(n)
		=G_s^{\mathrm{b}}(f)-f(s)$ 
	for every $n\in\omega\ssm\{s\}$. 
	Since $P^{\mathrm{sb}}$ is a signal-biased predictor, 
	we have 
\[
		G_n^{\mathrm{sb}}(f,f(s))
			-G_n^{\mathrm{sb}}(f,G_s^{\mathrm{sb}}(f))
			=G_s^{\mathrm{sb}}(f)-f(s). 
\]
	By the assumption, 
	$G_n^{\mathrm{sb}}(f,G_s^{\mathrm{sb}}(f))=f(n)$ 
	holds. 
	By the definition of $G_n^{\mathrm{b}}$
	for $n\in\omega\ssm\{s\}$, we have 
	\begin{align*}
		G_n^{\mathrm{b}}(f)-f(n)
		&=	G_n^{\mathrm{sb}}(f,f(s))-f(n)	\\
		&=	G_n^{\mathrm{sb}}(f,G_s^{\mathrm{sb}}(f))
			+G_s^{\mathrm{sb}}(f)
			-f(s)-f(n)	\\
		&=	G_s^{\mathrm{sb}}(f)
			-f(s). 
	\end{align*}

(\ref{item:signalbiased})${}\to{}$(\ref{item:parity}):
	Suppose that 
	$P^{\mathrm{sb}}
		=\{\langle n,G_n^{\mathrm{sb}}\rangle
		\st n\in\omega\}$ 
	is a signal-biased predictor 
	where $s$ is the signaler. 
	We define $p:{^\omega}K\to K$ 
	by letting 
\[
		p(f)=
		G_s^{\mathrm{sb}}(f)-f(s)
\]
	for each $f\in{^{\omega}}K$.  
	We check that $p$ is a $K$-parity function. 
	Since $G_s^{\mathrm{sb}}(f)$
	does not depend on $f(s)$, 
	we have 
	$p(f[s|k])-p(f[s|l])
		=l-k
		$. 
	For $n\in\omega\ssm\{s\}$, 
	by the definition of $p$, we have 
\[	
	p(f[n|k])-p(f[n|l])
	=	G_s^{\mathrm{sb}}(f[n|k])
		-G_s^{\mathrm{sb}}(f[n|l]).
\]	
Now,  
	under the predictor $P^{\mathrm{sb}}$, 
	every agent $n\in\omega\ssm\{s\}$ 
	guesses correctly for any coloring $f$.  
	Therefore, 
\[
	G_n^{\mathrm{sb}}(f[n|k],G_s^{\mathrm{sb}}(f[n|k]))
	=k
	\;\text{ and }\;
	G_n^{\mathrm{sb}}(f[n|l],G_s^{\mathrm{sb}}(f[n|l]))
	=l
	.
\]
Since $P^{\mathrm{sb}}$
is signal-biased 
and 
the value of $f$ at $n$ 
does not affect the value of $G_n^{\mathrm{sb}}$, 
we have 
\[
	l-k
	=	G_s^{\mathrm{sb}}(f[n|k])-G_s^{\mathrm{sb}}(f[n|l])
	=	p(f[n|k])-p(f[n|l]). 
\]	

(\ref{item:parity})${}\to{}$(\ref{item:signalbiased}):
Suppose that 
$p:{^\omega}K\to K$ 
is a $K$-parity function. 
We define a predictor $P=\{\langle n,G_n\rangle\}$ 
for a one-in-advance protocol with the designated signaler $s\in\omega$. 
For each $f\in{^\omega}K$, 
let $G_s(f)=p(f)$, 
and for $n\in\omega\ssm\{s\}$ and $k\in K$  
define $G_n(f,k)$ as 
\[
	G_n(f,k)=i	\iff	p(f[n|i])=k.	
\]
Note that, 
by the definition of a parity function, 
$P$ 
is well-defined and 
satisfies the second clause 
of the definition of a signal-biased predictor. 
We will show that $G_n(f,G_s(f))=f(n)$ 
for each $n\in\omega\ssm\{s\}$. 
Let $k=G_s(f)=p(f)$. 
Since $f[n|f(n)]=f$ trivially holds, 
	we have $p(f[n|f(n)])=k$, 
and 
by 
the definition of $G_n$ 
	we have	$G_n(f,k)=f(n)$.    

(\ref{item:starterbiased})${}\to{}$(\ref{item:parity}):
Suppose that, 
in a puzzle of one-by-one 
protocol 
with the set $\{t\}\cup\omega$ of agents, 
where $t$ is 
the starter 
(that is, the $<$-least agent), 
$P=\{\langle n,G_n\rangle\st n\in\{t\}\cup\omega\}$ 
is a starter-biased predictor. 
We 
define a 
function $p$ 
from ${^\omega}K$ to $K$ 
by letting 
$p(f)=G_t(f)$ for each $f\in{^\omega}K$. 
To see that $p$ is a $K$-parity function, 
fix $n\in\omega$ and $k,l\in K$. 
By the definition of 
$G_t$, 
we have 
$p(f[n|k])
	=G_t(f[n|k])$ 
and
$
p(f[n|l])
	=G_t(f[n|l])$. 
Since the strategy $G_n$ 
makes a correct guess 
and 
the value of $G_n$ 
does not depend on 
the value at $n$ of the function in the argument, 
we have 
\[
G_n(\{\langle t,p(f[n|k])\rangle\}\cup f)=k 
\;\text{ and }\;
G_n(\{\langle t,p(f[n|l])\rangle\}\cup f)=l. 
\]
Since $P$ is a starter-biased predictor, 
$p(f[n|k])-p(f[n|l])
	=
	l-k$
holds. 

(\ref{item:parity})${}\to{}$(\ref{item:starterbiased}):
Suppose that 
$p:{^\omega}K\to K$ 
is a $K$-parity function. 
We work in a puzzle of one-by-one 
protocol 
with the set $\{t\}\cup\omega$ of agents, 
where $t$ is 
the starter 
(that is, the $<$-least agent). 
We define 
a predictor 
$P=\{\langle n,G_n\rangle\st n\in\{t\}\cup\omega\}$. 
For each $f\in{^\omega}K$, 
let $G_t(f)=p(f)$, 
and for each $n\in\omega$ 
define $G_n(\{\langle t,k\rangle\}\cup f)$ by 
\[
	G_n(\{\langle t,k\rangle\}\cup f)=i
	{{}\iff{}}
	p(f[n|i])=k. 
\]
By the definition of $G_n$ and the property of $p$, 
$P$ 
is well-defined and 
satisfies the second clause of 
the definition of a starter-biased predictor. 
We will show, by induction on $n$, 
	$G_n(\{\langle t,p(f)\rangle\}\cup f)=f(n)$
	holds for every $n\in\omega$. 
Fix $n\in\omega$ 
and assume that every agent before $n$ 
except for $t$ 
guesses correctly.  
Then 
the agent $n$ will declare 
$i=G_n(\{\langle t,p(f)\rangle\}\cup f)$. 
By the definition of $G_n$, 
we have $p(f[n|i])=p(f)$, 
and by the property of $p$, 
$i=f(n)$ holds. 
\end{proof}

\section{Conclusion}

Here we check the existence of a $K$-parity function 
under ZFC. 


\begin{prop}
For any additive group $(K,{+})$, 
there is a $K$-parity function. 
\end{prop}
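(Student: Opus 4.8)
The plan is to invoke the axiom of choice to pick representatives from the classes of the ``equal modulo a finite set'' relation on ${}^\omega K$, and to let $p(f)$ be the sum of the pointwise corrections that turn $f$ into its representative; commutativity of $K$ is what makes such a sum meaningful.

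First I would put $f \equiv g$ for $f,g \in {}^\omega K$ whenever the set $\{n \in \omega : f(n) \neq g(n)\}$ is finite. Using the axiom of choice, fix a choice function $E \mapsto f_E$ selecting a member $f_E \in E$ from each $\equiv$-class $E$. Given $f \in {}^\omega K$, let $E$ be its $\equiv$-class (so $f \equiv f_E$ and the set $D_f = \{n \in \omega : f(n) \neq f_E(n)\}$ is finite) and define
\[
	p(f) = \sum_{n \in D_f} \bigl(f_E(n) - f(n)\bigr).
\]
Since $(K,+)$ is commutative, the value of this finite sum does not depend on any ordering of $D_f$, so $p \colon {}^\omega K \to K$ is well defined; note also that extending the sum to any finite superset of $D_f$ changes nothing, because each extra summand is $0$.

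Next I would check the defining identity of a $K$-parity function. Fix $f \in {}^\omega K$, $n \in \omega$ and $k,l \in K$. The functions $f$, $f[n|k]$ and $f[n|l]$ pairwise differ at the single coordinate $n$ at most, so they lie in the same $\equiv$-class $E$ and thus share the representative $f_E$. Set $C = \sum_{m \in D_f \ssm \{n\}} (f_E(m) - f(m))$, which does not refer to the coordinate $n$. For any $j \in K$ the functions $f$ and $f[n|j]$ agree off $n$, so $p(f[n|j]) = C + (f_E(n) - j)$ (the convention above covers the case $f_E(n) = j$). Hence
\[
	p(f[n|k]) - p(f[n|l]) = \bigl(f_E(n) - k\bigr) - \bigl(f_E(n) - l\bigr) = l - k,
\]
which is exactly what is required.

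I do not expect any genuine obstacle here: the construction is routine once one sees it. The two points that need care are that the finite sum defining $p$ is unambiguous — this is precisely where the commutativity of $K$ is used — and that the representative-selecting function exists, which is where the axiom of choice enters. Combined with Theorem~\ref{thm:equiv}, this also produces, in ZFC, biased, signal-biased, and starter-biased predictors in the corresponding puzzles.
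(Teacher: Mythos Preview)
Your argument is correct and is essentially identical to the paper's own proof: both fix representatives for the ``equal modulo finite'' equivalence relation on ${}^\omega K$ and define the parity function as the (finite, commutative) sum of the pointwise differences between $f$ and its representative. You simply spell out the verification that the paper leaves as ``easy to see.''
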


\begin{proof}
Define an equivalence relation ${=}^*$ 
on ${^\omega}K$ 
as follows:  
\begin{center}
$f{=}^*g$ 
if and only if 
$\{n\in\omega\st f(n)\neq g(n)\}$ is finite. 
\end{center}
Fix a complete system $A\subseteq{^\omega}K$ of representatives 
of the relation ${=}^*$. 
Define a function $\varphi$ from ${^\omega}K$ to $K$ 
as follows: 
For $f\in{^\omega}K$, 
let $\tilde{f}$ be the unique member of $A$ 
with $f\mathbin{{=}^*}\tilde{f}$,  
and define $\varphi(f)$ by 
\[
	\varphi(f)
	=
	\sum_{n\in\omega}\big(\tilde{f}(n)-f(n)\big). 
\]
It is easy to see that $\varphi$ is well-defined 
and is a $K$-parity function. 
\end{proof}

Note that, 
as we mentioned in Section~\ref{sec:intro}, 
a $\mathbb{Z}_2$-parity function 
may not exist under ZF${}+{}$DC\@.  
So we regard any of the condition 
listed in Theorem~\ref{thm:equiv} 
as an identical weak choice principle.

In the case of $K=\mathbb{Z}_2$, 
Theorem~\ref{thm:equiv} leads the following result, 
which extends both 
Theorem~\ref{thm:siglen} 
and 
Theorem~\ref{thm:GLR}.  

\begin{cor}
For hat puzzles with the set $\omega$ of agents 
and the 
set $K=\{0,1\}$ 
of hat colors, 
the following conditions are equivalent:
\begin{enumerate}
\item
	There is a Lenstra predictor 
	in the puzzle with complete visibility 
	and 
	simultaneous 
	declaration. 
\item 
	There is a signaling predictor 
	in the puzzle with complete visibility 
	and 
	one-in-advance 
	declaration. 
\item
	In the puzzle with one-way forward-complete visibility 
	and one-by-one declaration along the order on $\omega$, 
	there is a predictor under which 
	everyone but the agent $0$ guesses correctly. 
\item
	There is a parity function. 
\end{enumerate}
\end{cor}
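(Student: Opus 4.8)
The plan is to read this Corollary off from Theorem~\ref{thm:equiv} by specializing to $K=\mathbb{Z}_2=(\{0,1\},{+})$ and checking that, in this case, the four named notions in the Corollary coincide with the four notions in Theorem~\ref{thm:equiv}. Thus I would set up a four-entry dictionary: for $K=\mathbb{Z}_2$, (i)~a Lenstra predictor is the same as a biased predictor; (ii)~a signaling predictor is the same as a signal-biased predictor; (iii)~a predictor under which everyone but agent~$0$ guesses correctly is the same as a starter-biased predictor; and (iv)~a parity function is the same as a $\mathbb{Z}_2$-parity function. Granting the dictionary, conditions~(1)--(4) of the Corollary are literally conditions~(\ref{item:biased})--(\ref{item:parity}) of Theorem~\ref{thm:equiv} instantiated at $K=\mathbb{Z}_2$, and those are equivalent by that theorem.

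Three of the four dictionary entries are already available and need nothing new. Entry~(i) is the remark recorded just after the definition of a biased predictor. Entry~(iv) is the remark recorded just after the definition of a $K$-parity function. Entry~(ii) is the conjunction of the obvious implication that a signal-biased predictor satisfies clause~(1) of its definition, hence is signaling, with the converse, which is the content of the Proposition stating that for $K=\mathbb{Z}_2$ a signaling predictor is a signal-biased predictor. So the only point requiring an argument is entry~(iii), and there only the implication ``everyone but agent~$0$ guesses correctly $\Rightarrow$ starter-biased,'' the reverse implication being immediate from clause~(1) of the definition of a starter-biased predictor.

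For that implication I would argue exactly as in the proof of that Proposition. Let $P=\{\langle n,G_n\rangle\st n\in\omega\}$ be a predictor for the one-by-one protocol over $K=\{0,1\}$, with agent~$0$ the starter, under which every agent $n\in\omega\ssm\{0\}$ guesses correctly; the task is to verify clause~(2) of the starter-biased definition, which for $K=\mathbb{Z}_2$ amounts to $G_n(f[0|0])\neq G_n(f[0|1])$ for all $f\in{^\omega}2$ and all $n\in\omega\ssm\{0\}$. Suppose instead $G_n(f[0|0])=G_n(f[0|1])=i$ for some such $f$ and $n$. Since $G_n$ ignores the value of its argument at~$n$, the value $G_n(f[0|a])$ is unchanged when $f$ is replaced by $f[n|0]$ or by $f[n|1]$; hence, whatever agent~$0$ signals, agent~$n$ declares $i$ both when the true coloring is $f[n|0]$ and when it is $f[n|1]$. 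As agent~$n$ guesses correctly in both cases, this forces $i=0$ and $i=1$, a contradiction. Therefore $P$ itself is starter-biased, which gives entry~(iii).

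Since this last argument is a direct transcription of the $\mathbb{Z}_2$ Proposition and everything else is pure bookkeeping, I do not anticipate a real obstacle; the only thing to watch is to invoke the dictionary entries in the correct direction.
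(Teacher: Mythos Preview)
Your proposal is correct and matches the paper's intended derivation: the paper states the Corollary without proof, simply as the specialization of Theorem~\ref{thm:equiv} to $K=\mathbb{Z}_2$, and your four-entry dictionary is exactly how that specialization is justified. The only place you do real work beyond what the paper spells out is entry~(iii), and your argument there is sound (the implicit step---that when the true coloring is $f[n|j]$ the input reaching agent~$n$ really is $f[n|j][0|a]$ for some $a$---follows because agents $1,\ldots,n-1$ guess correctly by hypothesis, so their declarations coincide with the true colors at those positions).

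One small remark: since the paper explicitly says the Corollary ``extends both Theorem~\ref{thm:siglen} and Theorem~\ref{thm:GLR},'' an alternative way to close the loop is simply to quote Theorem~\ref{thm:GLR} for the equivalence of your items~(3) and~(4), bypassing the need to show that a correct-except-agent-$0$ predictor is already starter-biased. Your direct argument for~(iii) is a pleasant self-contained substitute for that citation, parallel to the Proposition handling entry~(ii).
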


\bibliographystyle{plain}
\bibliography{kada}

\bigskip

\begin{quote}
	\textsc{Masaru KADA}\\
	Graduate School of Science, Osaka Prefecture University\\
	1--1 Gakuen-cho, Naka-ku, Sakai Osaka 599--8531 JAPAN\\
	\texttt{kada@mi.s.osakafu-u.ac.jp}
\end{quote}

\medskip

\begin{quote}
	\textsc{Souji SHIZUMA}\\
	Graduate School of Science, Osaka Prefecture University\\
	1--1 Gakuen-cho, Naka-ku, Sakai Osaka 599--8531 JAPAN\\
	\texttt{dd305001@edu.osakafu-u.ac.jp}
\end{quote}

\end{document}